\numberwithin{equation}{section}
\newtheorem{theorem}{Theorem}[section]
\newtheorem{lemma}[theorem]{Lemma}
\newtheorem{corollary}[theorem]{Corollary}
\theoremstyle{definition}
\newtheorem{definition}[theorem]{Definition}
\theoremstyle{remark}
\newcommand{\mc}[1]{\mathcal{#1}}
\newcommand{\setm}{\setminus}
\newcommand{\empt}{\emptyset}
\newcommand{\subs}{\subset}
\def\<{\left\langle}
\def\>{\right\rangle}
\def\br#1;#2;{\bigl[ {#1} \bigr]^ {#2} }
\newcommand{\kcC}[1]{$#1$-\cC}
\newcommand{\cC}{cellular-compact}
\newcommand{\inte}{\operatorname{int}}
\newcommand{\aaa}[1]{(a_{#1})}
\newcommand{\aaamin}[1]{(a^-_{#1})}
\author[I. Juh\'asz]{Istv\'an Juh\'asz}
\address      { Alfr\'ed Rényi Institute of Mathematics 
%Eötvös Loránd Research Network
 }
\email{juhasz@renyi.hu}
\author[L. Soukup]{Lajos Soukup}
\thanks
  {
   }
\address
      { Alfr{\'e}d R{\'e}nyi Institute of Mathematics 
%      Eötvös Loránd Research Network 
      }
\email{soukup@renyi.hu}
\author[Z. Szentmikl\'ossy]{Zolt\'an Szentmikl\'ossy}
\address{E\"otv\"os University of Budapest}
\email{szentmiklossyz@gmail.com}
\subjclass[2010]{54A25, 54A35, 54D30, 54D65}
\keywords{ compact space, cellular-compact space, closed pseudocharacter, first countable space,  weakly Lindelöf space}
\title[On cellular-compact spaces]{On cellular-compact spaces}
\thanks{The research on and preparation of this paper was
supported by  OTKA grants no. K113047 and   K129211}
\date{\today}
\begin{document}

\begin{abstract}
  As it was introduced by Tkachuk and Wilson in \cite{TW}, a topological space $X$ is {\em cellular-compact} if for any cellular,
  i.e. disjoint, family $\mathcal U$
  of non-empty open subsets  of $X$
   there is a compact  subspace
  $K\subset X$ such that $K\cap U\ne \emptyset$ for each $U\in \mc U$.

  In this note we answer several questions raised in \cite{TW} by showing that
  \begin{enumerate}[(1)]
  \item    any first countable cellular-compact  $T_2$ space is  $T_3$, and
  so its cardinality is at most  $\mathfrak{c} = 2^{\omega}$;
  \item  $cov(\mc M)>{\omega}_1$ implies
  that every first countable and separable cellular-compact $T_2$ space is compact;
  \item if there is no $S$-space then any cellular-compact
  $T_3$ space of countable spread is compact;
  \item $MA_{{\omega}_1}$ implies that every point of a compact $T_2$ space of countable spread has a disjoint local $\pi$-base.
   \end{enumerate}

\end{abstract}

\maketitle

\section{Introduction}
A topological space $X$ is said to be {\em \kcC {\kappa}}
  if for any cellular family $\mc U$ of open subsets of $X$
  with $|\mc U|={\kappa}$ there is a compact 
  $K\subs X$ such that $K\cap U\ne \empt$ for each $U\in \mc U$.
  $X$ is {\em \cC} iff it is \kcC{{\kappa}}
  for all  cardinals ${\kappa}$.

In  \cite[Theorem 4.13]{TW} the authors proved that the cardinality of a first countable cellular-compact
$T_3$ space does not exceed the cardinality of the continuum and asked the natural question if
this result can be extended to $T_2$ spaces:

\noindent {\sc Question 5.1} {\em Let $X$ be a \cC\ first countable
$T_2$ space. Is it true that $|X|\le 2^{\omega}$?}

In \cite{B}   a partial answer was given to this question by showing that this statement 
can be extended to the class of Urysohn spaces.
In Theorem \ref{tm:ckt2t3} we give the full affirmative answer by proving that, somewhat surprisingly,
any first countable and \cC\ $T_2$  space is actually $T_3$.

In \cite{TW}, under CH, a non-compact Tychonov \cC\ space was constructed which is both
first countable and separable. Consequently,
the authors raised the following question:

\noindent {\sc Question 5.2} {\em Does there exist a model of ZFC in
 which every Tychonov \cC\ space that is both separable and
first countable is compact? }

Our Theorem \ref{tm:covMw1-cC2compact}
gives an affirmative answer to this question by showing that the assumption $cov(\mc M)>{\omega}_1$,
or equivalently $MA_{\omega_1}$(countable), implies  that every  \cC\
separable and first countable $T_2$ space is compact.

The following two closely related questions were also raised in \cite{TW}.

\noindent {\sc Question 5.3} {\em Let $X$ be a hereditarily separable
cellular-compact space.
Must $X$ be compact?}

\noindent {\sc Question 5.4} {\em Let $X$ be a \cC\ space of countable spread.
Must $X$ be compact?}

In Theorem \ref{tm:cKhd} below we show that the answer to both questions is  "yes" 
for $T_3$ spaces, provided that there are no S-spaces, e.g. under PFA.

In the closely related Corollary \ref{co:cKhdlk} we give a consistent affirmative answer to
{\sc Question 5.8} of \cite{TW} by proving that, under $MA_{{\omega}_1}$, every point
of a compact $T_2$ space of countable spread has a disjoint local $\pi$-base.

Our notation is standard. For any topological space $X$ we use $\tau(X)$ to denote the
family of all open sets in $X$. For a subset $A \subs X$ we use $A'$ to denote the
derived set of all accumulation points of $A$.
Concerning cardinal functions we follow the notation of \cite{Ju}.

\section{When $T_2$ implies $T_3$ for \cC\ spaces}\label{23}

Let us recall that a subset $Y\subs X$ is called {\em strongly discrete (SD)} iff there is
    a disjoint neighborhood assignment $u:Y\to \tau(X)$. Equivalently, SD sets are the
    ranges of choice functions on cellular families. While every infinite set in a $T_2$ space
    has an infinite discrete subset, there are $T_2$ spaces in which some infinite sets have
    no infinite SD subset, see e.g. the example in § 4 of \cite{HJ}. Below we introduce
    a property of subsets that will allow us to circumvent this inconvenient phenomenon
    in general $T_2$ spaces.

     \begin{definition} Let $X$ be any topological space.
      A subset $Y\subs X$ is called {\em fluffy} (in $X$)  iff there is a cellular family
      $\{U_y:y\in Y\}\subs {\tau}(X)$   such that
      $y\in \overline{U_y}$ for each $y\in Y$.
      \end{definition}

Clearly, every SD set is fluffy. Now, it turns out that every infinite set in any $T_2$ space
has an infinite fluffy subset, but to prove that we need the following simple auxiliary result.

\begin{lemma}\label{lm:a-or-b}
    Assume that $U$ is an open set in a topological space $X$ and the point
    $p\notin \inte{\overline{U}}$. If and $A$ and $B$ are disjoint open sets in $X$ then
      \begin{displaymath}
        p\notin \inte{(\overline{U\cup A})}\cap \inte{(\overline{U\cup B})}.
      \end{displaymath}
    \end{lemma}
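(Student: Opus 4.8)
The plan is to argue by contradiction. Suppose that $p\in \inte{(\overline{U\cup A})}\cap \inte{(\overline{U\cup B})}$ and set $W=\inte{(\overline{U\cup A})}\cap \inte{(\overline{U\cup B})}$, which is then an open neighborhood of $p$. The first thing I would exploit is the hypothesis $p\notin \inte{\overline{U}}$: since lying in the interior of $\overline U$ is equivalent to having \emph{some} open neighborhood contained in $\overline U$, the set $W$ cannot be a subset of $\overline U$. Hence $G:=W\setm \overline U$ is a \emph{nonempty} open set, and producing this nonempty open witness is the engine of the whole argument.

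Next I would reduce the closures of the unions via the elementary identity $\overline{U\cup A}=\overline U\cup\overline A$ (closure commutes with finite unions). Since $W\subs \overline{U\cup A}=\overline U\cup \overline A$, removing $\overline U$ yields $G\subs \overline A$; by the symmetric computation, $G\subs \overline B$ as well. Thus $G$ is a nonempty open set lying inside $\overline A\cap\overline B$.

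The final and most delicate step is to convert the disjointness of $A$ and $B$ into a contradiction. The key observation is that disjoint open sets satisfy $A\cap\overline B=\empt$: a point of $A$ has $A$ itself as an open neighborhood, and this neighborhood misses $B$, so no point of $A$ can be in $\overline B$. Now, because $G$ is a nonempty open subset of $\overline A$, it must in fact meet $A$ — picking any $x\in G\subs\overline A$, the set $G$ is an open neighborhood of $x$, so $G\cap A\ne\empt$. But then $G\cap A$ is a nonempty subset of $A\cap \overline B$ (using $G\subs\overline B$), contradicting $A\cap\overline B=\empt$. This contradiction shows that $p$ cannot belong to both interiors, which is exactly the claim. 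I expect the only real subtlety to be packaging cleanly the two small point-set facts driving the contradiction — that a nonempty open subset of $\overline A$ meets $A$, and that $A\cap\overline B=\empt$ for disjoint open $A,B$ — while the rest is routine bookkeeping.
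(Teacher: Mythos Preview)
Your proof is correct and follows essentially the same route as the paper's: assume the contrary, observe that $W\setminus\overline U$ is a nonempty open set contained in $\overline A\cap\overline B$, and derive a contradiction from the disjointness of $A$ and $B$. The only cosmetic difference is that the paper phrases the final contradiction as $\inte\overline A\cap\inte\overline B=\emptyset$, whereas you unpack this into the two ingredients ``a nonempty open subset of $\overline A$ meets $A$'' and ``$A\cap\overline B=\emptyset$''---but these amount to the same thing.
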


    \begin{proof}
      Assume, on the contrary,
      that $p\in V=\inte{\overline{U\cup A}}\cap\inte{\overline{U\cup B}}$.
      Then $p\notin \inte{\overline{U}}$ implies $p \in W = V \setminus \overline{U} \ne \emptyset$,
      while $W \cap \overline{U} = \emptyset$ implies that
      $W\subs \inte{\overline A}\cap \inte{\overline B}$. This, however, contradicts that
      $\inte{\overline A}\cap \inte{\overline B} = \empt$ because $A$ and $B$ are disjoint open sets.
      \end{proof}

      \begin{theorem}\label{tm:fluffy}
      Every infinite set $Y$ in a $T_2$  space $X$
      has an infinite  fluffy subset.
      \end{theorem}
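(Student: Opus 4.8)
The plan is to build, by a recursion of length $\omega$, pairwise disjoint nonempty open sets $U_n$ together with distinct points $z_n\in Y$ satisfying $z_n\in\overline{U_n}$; the set $\{z_n:n<\omega\}$ will then be the desired infinite fluffy subset, witnessed by the cellular family $\{U_n\}$. Since a $T_2$ space lets us separate distinct points by disjoint open sets, the only real work is to keep the recursion going, and this is exactly where Lemma \ref{lm:a-or-b} enters.

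The invariant I would carry at stage $n$ is: disjoint open sets $U_0,\dots,U_{n-1}$, points $z_i\in\overline{U_i}$, and an infinite set $B_n\subs Y$, disjoint from $\{z_0,\dots,z_{n-1}\}$, such that no point of $B_n$ lies in $\inte\overline{U_0\cup\dots\cup U_{n-1}}$. The point of the last clause is a simple but crucial observation: writing $F_n=\overline{U_0\cup\dots\cup U_{n-1}}$ and $R_n=X\setm F_n$, the condition $b\notin\inte F_n$ is equivalent to $b\in\overline{R_n}$. Thus every point of $B_n$ sits in the closure of the open ``free region'' $R_n$, which is disjoint from all of the previously chosen $U_i$.

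For the recursion step I would pick two distinct points $b,b'\in B_n$ and, using $T_2$, disjoint open neighborhoods $A\ni b$, $A'\ni b'$; intersecting with $R_n$ gives disjoint open sets $\tilde A=A\cap R_n$ and $\tilde A'=A'\cap R_n$, still satisfying $b\in\overline{\tilde A}$ and $b'\in\overline{\tilde A'}$ (because $b,b'\in\overline{R_n}$), and both disjoint from every $U_i$ with $i<n$. Now for each $p\in B_n$ we have $p\notin\inte F_n=\inte\overline{U_0\cup\dots\cup U_{n-1}}$, so Lemma \ref{lm:a-or-b} applies with $U=U_0\cup\dots\cup U_{n-1}$ and the disjoint pair $\tilde A,\tilde A'$, yielding that $p$ cannot lie in both $\inte\overline{U\cup\tilde A}$ and $\inte\overline{U\cup\tilde A'}$. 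Hence the two ``capture sets'' $C=\{p\in B_n:p\in\inte\overline{U\cup\tilde A}\}$ and $C'=\{p\in B_n:p\in\inte\overline{U\cup\tilde A'}\}$ are disjoint subsets of the infinite set $B_n$, so at least one of $B_n\setm C$, $B_n\setm C'$ is infinite. Choosing the corresponding side as $U_n$ (and $z_n=b$ or $b'$ accordingly) and letting $B_{n+1}$ be the remaining uncaptured points with $z_n$ deleted reinstates the entire invariant.

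After $\omega$ steps the sets $U_n$ are pairwise disjoint (each $U_n\subs R_n$ avoids all earlier $U_i$), nonempty, and satisfy $z_n\in\overline{U_n}$ with the $z_n$ distinct, so $\{z_n\}$ is fluffy. The step I expect to be the main obstacle — and the one the auxiliary lemma is designed to remove — is guaranteeing that, after splitting the free region, one can choose a side leaving infinitely many points still outside the interior of the closure of the enlarged union; without Lemma \ref{lm:a-or-b} there would be no reason the reservoir could not be exhausted in a single split.
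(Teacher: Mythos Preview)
Your proposal is correct and follows essentially the same approach as the paper's proof: an $\omega$-length recursion carrying the invariant that infinitely many points of $Y$ remain outside $\inte\overline{\bigcup_{k<n}U_k}$, with Lemma~\ref{lm:a-or-b} used at each step to guarantee that one of two $T_2$-separated choices preserves this invariant. The only cosmetic difference is that you intersect the separating neighborhoods with the free region $R_n$ \emph{before} invoking the lemma, whereas the paper applies the lemma to the raw neighborhoods $A,B$ and only afterwards sets $U_n=A\setminus\overline{\bigcup_{k<n}U_k}$; since $\overline{U\cup A}=\overline{U\cup(A\cap R_n)}$, the two versions are interchangeable.
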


\begin{proof}
      By induction on $n < \omega$ we shall define open sets $U_n$ and points $y_n\in Y \cap \overline {U_n}$
      such that the family $\{U_n:n\in {\omega}\}$ is cellular
      and for each $n$ the the following inductive hypothesis $*(n)$ holds:
      \begin{displaymath}\tag{$*(n)$}
      \text{The set } Y_n = Y\setm \inte{\big (\overline {\bigcup _{k<n}U_k}\big)} \text{ is infinite}.
      \end{displaymath}
      Note that we trivially have $Y_0 = Y$, hence $*(0)$ holds.

      Now, assume that we have  defined $U_k$ and $y_k$ for all $k<n$ satisfying $*(n)$. Pick distinct
      points $a$ and $b$ from $Y_n \setminus \{y_k : k < n\}$ and fix an open neighborhood $A$ of $a$ and  an open neighborhood $B$ of $b$
      with $A \cap B = \emptyset$. Then we may apply Lemma \ref{lm:a-or-b} with $U=\bigcup _{k<n}{U_k}$ for all $p \in Y_n$
      to obtain that $$Y_n = [Y_n \setminus \inte{\overline {U\cup A}}] \cup [Y_n \setminus \inte{\overline {U\cup B}}].$$
      By symmetry, we may thus assume that $Y_n \setminus \inte{\overline {U\cup A}}$ is infinite.

      We then put $y_n = a$ and $U_n = A \setminus \overline {\bigcup _{k<n}U_k}$. We have to show that $y_n = a \in \overline {U_n}$.
      If $a \notin \overline {\bigcup _{k<n}U_k}$ then we even have $a \in U_n$.
      If $a \in \overline {\bigcup _{k<n}U_k}$ then $a$ is a boundary point of $\overline {\bigcup _{k<n}U_k}$ and hence of $X \setminus \overline {\bigcup _{k<n}U_k}$ as well.
      But then $a$ is a boundary point of  $A \setminus \overline {\bigcup _{k<n}U_k}$ as well because $A$ is a neighborhood of $a$.
      It is also obvious from this definition that $$Y_{n+1} = Y_n \setm \inte{\big (\overline {\bigcup _{k<n}U_k \cup A}\big)},$$
      is infinite, hence $*(n+1)$ is satisfied as well. This completes our induction and yields the infinite fluffy
      subset $\{y_n:n\in {\omega}\}$ of $Y$.
\end{proof}

We shall use fluffy sets to show that first countable \cC\ $T_2$  spaces are $T_3$,
as was promised in the introduction. Actually, something weaker than first countability
will suffice for that, namely having {\em countable closed pseudocharacter}. Let us recall, see e.g. \cite{Ju},
that the closed pseudocharacter $\psi_c(p,X)$ of a point $p$ in a space $X$ is defined by

        \begin{displaymath}
        \psi_c(p,X)=\min\{|\mc U|: \mc U\subs \tau(X) \text{ and }
        \{x\}=\bigcap_{U\in \mc U} {U}=\bigcap_{U\in \mc U} \overline{U}\}.
        \end{displaymath}
        Of course, if $X$ is $T_2$ then $\psi_c(p,X)$ is defined for all $p \in X$
        and then so is
        \begin{displaymath}
        \psi_c(X)=\sup\{\psi_c(p,X):p\in X\},
        \end{displaymath}
        the closed pseudocharacter of $X$. Note that $p$ is an isolated point in $X$ iff $\psi_c(p,X) = 1$.
        Also, it is obvious that $\psi_c(p,X) \le \chi(p,X)$ for all $p \in X$, hence $\psi_c(X) \le \chi(X)$.
        Finally, if $\psi_c(p,X)={\omega}$ then there is a sequence $\{V_n:n\in {\omega}\}$ of open neighborhoods of $p$
        witnessing this which is also decreasing.

    \begin{lemma}\label{lm:almostconvergeinG}
    Assume that $X$ is a $T_2$ space and $p \in X$ with $\psi_c(p,X)={\omega}$, moreover
    $\{V_n:n\in {\omega}\}$ is a decreasing sequence of open neighborhoods of $p$
    witnessing this.\\
    \noindent (i) If $a_n \in \overline{V_n} \setm \{p\}$ for all $n\in {\omega}$
    then $\{a_n:n\in {\omega}\}'\subs \{p\}$.\\
    \noindent (ii) If $A \subs G\in {\tau}(X)$ and $p\in A'$ then there is a cellular family
    $\{U_n:n\in {\omega}\}$ of open sets such that $U_n \subs G \cap V_n$ and
    $A\cap U_n\ne \empt$ for each $n\in {\omega}$.
    \end{lemma}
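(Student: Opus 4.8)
The plan is to prove the two parts by separate elementary arguments, using throughout that the witnessing sequence satisfies $\bigcap_n V_n=\{p\}=\bigcap_n\overline{V_n}$ and that $X$, being $T_2$, is in particular $T_1$. Nowhere will I need first countability, only these two facts.

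First I would dispose of (i) by contradiction. Suppose $q\in\{a_n:n\in\omega\}'$ with $q\ne p$. Since $q\notin\bigcap_n\overline{V_n}=\{p\}$, there is some $m$ with $q\notin\overline{V_m}$, so that $W=X\setm\overline{V_m}$ is an open neighborhood of $q$. As the sequence $\{V_n\}$ decreases, $a_n\in\overline{V_n}\subs\overline{V_m}$ for every $n\ge m$, whence $W$ contains at most the finitely many points $a_0,\dots,a_{m-1}$ of the set $\{a_n:n\in\omega\}$. But in a $T_1$ space every neighborhood of an accumulation point of a set meets that set in infinitely many points, a contradiction. Hence $\{a_n:n\in\omega\}'\subs\{p\}$.

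For (ii) I would build the $U_n$ by recursion on $n$, carrying the extra invariant that $p\notin\overline{U_k}$ for every $k<n$. Assume $U_0,\dots,U_{n-1}$ have been found, pairwise disjoint, with $U_k\subs G\cap V_k$, $A\cap U_k\ne\empt$, and $p\notin\overline{U_k}$. Then $F=\bigcup_{k<n}\overline{U_k}$ is closed and $p\notin F$, so $V_n\setm F$ is an open neighborhood of $p$. Since $p\in A'$, there is a point $a_n\in A\cap(V_n\setm F)$ with $a_n\ne p$; note that $a_n\in G$ because $A\subs G$. Using $T_2$, pick disjoint open sets $O\ni a_n$ and $O'\ni p$, and put $U_n=O\cap(X\setm F)\cap G\cap V_n$. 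I would then check the required properties: $a_n\in U_n$ yields both $A\cap U_n\ne\empt$ and $U_n\ne\empt$, while $U_n\subs G\cap V_n$ holds by construction; $U_n\subs X\setm F$ makes $U_n$ disjoint from each earlier $U_k$; and $U_n\subs O$ together with $p\in O'$, $O\cap O'=\empt$, gives $p\notin\overline{U_n}$, preserving the invariant. The resulting $\{U_n:n\in\omega\}$ is then a cellular family with the asserted properties.

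The only delicate point, and the reason for carrying the invariant $p\notin\overline{U_k}$, is to ensure that at every stage the forbidden closed set $F$ misses $p$, so that $V_n\setm F$ remains a neighborhood of $p$ in which the accumulation hypothesis $p\in A'$ supplies a fresh witness $a_n\in A$. I would also emphasize that $p$ itself need not belong to $G$; this causes no difficulty, since the witnesses $a_n$ are drawn from $A\subs G$ and then separated from $p$ inside $G\cap V_n$ rather than requiring $G\cap V_n$ to be a neighborhood of $p$.
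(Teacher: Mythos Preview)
Your proof is correct and follows essentially the same approach as the paper's. Part (i) is the contrapositive of the paper's direct argument (the paper simply observes $\{a_m:m\ge n\}'\subs\overline{V_n}$ for each $n$ and intersects), and part (ii) is virtually identical: the paper also carries the invariant $p\notin\overline{U_k}$, sets $W_n=V_n\setm\overline{\bigcup_{k<n}U_k}$, picks $a_n\in(A\cap W_n)\setm\{p\}$, and chooses $U_n\ni a_n$ inside $G\cap W_n$ with $p\notin\overline{U_n}$; you are just more explicit in invoking $T_2$ to achieve this last condition.
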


    \begin{proof}
    (i) Assume that $\{a_n:n\in {\omega}\}$ is given with $a_n \in \overline{V_n} \setm \{p\}$ for all $n\in {\omega}$.
    Then $\{a_m:m \ge n\}\subs \overline{V_n}$ and so $\{a_n:n\in {\omega}\}' = \{a_m:m \ge n\}'\subs \overline{V_n}$ for all $n$,
hence $$\{a_n:n\in {\omega}\}'\subs \bigcap_{n\in {\omega}}\overline{V_n} = \{p\}.$$
(ii) The required sequence $\{U_n:n\in \}$ together with points $a_n \in A \cap U_n$  will be constructed by induction on $n$ in
    such a way that $U_n\subs V_n\cap G$ and $p \notin \overline {U_n}$ for all $n < \omega$.
Assume that we have constructed $\{U_k:k<n\}$, then
$p \in W_n= V_n\setm \overline{\bigcup_{k<n}U_k}$, hence
    $p\in (A\cap W_n)'$. We may then pick a point $a_n\in (A\cap W_n)\setm \{p\}$ and an open neighborhood
    $U_n$ of $a_n$ such that $U_n\subs G\cap W_n$ and $p\notin \overline {U_n}$.
    So the inductive step can be carried out and the family $\{U_n:n\in {\omega}\}$
    with the required properties is constructed.
\end{proof}

\begin{definition}
    The space $X$ is said to have property $\aaa{\kappa}$ (resp. $\aaamin{{\kappa}}$)
    iff every SD-subset of $X$ of size $ {\kappa}$ has compact closure (resp. has a complete accumulation point).
    \end{definition}

\begin{lemma}\label{lm:fluffycomp}
    Assume that $X$ is a $T_2$ space with property $\aaamin{\omega}$ and $Y \subs X$ is fluffy such that
    $\psi_c(y,X) = {\omega}$ for each $y\in Y$. Then there is an SD subset $D$ of $X$ with $|D| = |Y|$
    for which $Y \subs \overline{D}$. Moreover, if $X$ is also $|Y|$-\cC\ then $D$ can be chosen so that
    $\overline{D}$, and hence $\overline{Y}$ as well, is compact.
    \end{lemma}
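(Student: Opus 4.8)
The plan is to produce $D$ as a disjoint union $D = \bigcup_{y\in Y} D_y$, where each $D_y$ is a countable sequence lying inside the witnessing open set $U_y$ of the fluffy family and accumulating to $y$. First I would fix, for every $y\in Y$, a decreasing sequence $\{V^y_n:n\in\omega\}$ of open neighbourhoods of $y$ witnessing $\psi_c(y,X)=\omega$. Since $\psi_c(y,X)=\omega$ forces $y$ to be non-isolated and $y\in\overline{U_y}$, a short case analysis (according to whether or not $y\in U_y$) yields $y\in U_y'$. Hence Lemma \ref{lm:almostconvergeinG}(ii), applied with $A=G=U_y$ and $p=y$, provides a cellular family $\{U^y_n:n\in\omega\}$ of non-empty open sets with $U^y_n\subs U_y\cap V^y_n$ and $y\notin\overline{U^y_n}$ for all $n$.

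Choosing a point $d^y_n\in U^y_n$ and putting $D_y=\{d^y_n:n\in\omega\}$, the assignment $n\mapsto U^y_n$ witnesses that $D_y$ is an SD set of size $\omega$. The crucial claim is $y\in\overline{D_y}$, and here I would combine the two hypotheses. On the one hand $d^y_n\in\overline{V^y_n}\setm\{y\}$, so Lemma \ref{lm:almostconvergeinG}(i) gives $D_y'\subs\{y\}$. On the other hand, property $\aaamin{\omega}$ forces the countable SD set $D_y$ to have a complete accumulation point, which must lie in $D_y'\subs\{y\}$ and hence equal $y$. Thus $y$ is in fact a complete accumulation point of $D_y$, so $y\in\overline{D_y}$.

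Now set $D = \bigcup_{y\in Y}D_y$. Since the $U_y$ are pairwise disjoint and $D_y\subs U_y$, the points $d^y_n$ are pairwise distinct and the assignments combine into a single disjoint neighbourhood assignment on $D$; hence $D$ is SD with $|D|=|Y|\cdot\omega=|Y|$, and $Y\subs\bigcup_{y}\overline{D_y}\subs\overline D$, which proves the first assertion. For the moreover part, the refined family $\{U^y_n:y\in Y,\,n\in\omega\}$ is cellular of size $|Y|$, so $|Y|$-\cC\ yields a compact set $K\subs X$ meeting every $U^y_n$. Repeating the construction with the choices $d^y_n\in K\cap U^y_n$ changes nothing in the argument above (it only used $d^y_n\in\overline{V^y_n}\setm\{y\}$), and produces an SD set $D\subs K$ with $Y\subs\overline D$. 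As $K$ is compact in the $T_2$ space $X$ it is closed, so $\overline D\subs K$ is a closed subset of a compact set and therefore compact; and since $Y\subs\overline D$ we get that $\overline Y\subs\overline D$ is compact as well.

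The one genuinely delicate step is establishing $y\in\overline{D_y}$: the points $d^y_n$ chosen from $U_y$ need not, a priori, accumulate to $y$, nor even accumulate at all. It is precisely the interplay of Lemma \ref{lm:almostconvergeinG}(i), which confines every accumulation point of $D_y$ to $\{y\}$, with property $\aaamin{\omega}$, which guarantees at least one complete accumulation point, that forces the desired conclusion.
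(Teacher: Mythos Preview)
Your argument is correct and follows essentially the same route as the paper's proof: build inside each $U_y$ a cellular $\omega$-sequence via Lemma~\ref{lm:almostconvergeinG}, use part~(i) together with property $\aaamin{\omega}$ to force $y$ into the closure of the chosen points, and for the moreover clause pick the points inside the compact $K$ supplied by $|Y|$-\cC. The only small omission is the case of finite $Y$: your computation $|D|=|Y|\cdot\omega=|Y|$ presumes $|Y|\ge\omega$, whereas for finite $Y$ one simply takes $D=Y$ (a finite set in a $T_2$ space is SD), exactly as the paper does in its opening sentence.
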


\begin{proof}
Since any finite subset of a $T_2$ space is SD, we may assume that $Y$
is infinite. Now, fix a cellular family $\{U_y : y \in Y\} \subs {\tau}(X)$ witnessing that  $Y$ is fluffy.
By Lemma \ref{lm:almostconvergeinG}, for each $y\in Y$ there is a cellular family
    $\mc U_y = \{U_{y,n} : n < \omega\}$ in $U_y$ such that if $a_{y,n} \in U_{y,n}$ for each $n < \omega$
then $\{a_{y,n}:n\in {\omega}\}'\subs \{y\}$. But now we know that $\{a_{y,n}:n\in {\omega}\}' \ne \emptyset$,
hence $y \in \{a_{y,n}:n\in {\omega}\}'$. Clearly, then $D = \{a_{y,n} : y \in Y,\,n\in {\omega}\}$ is as required.

If, in addition, $X$ is $|Y|$-\cC\ then there is a compact subset $K$ of $X$ such that $K \cap U_{y,n} \ne \emptyset$
for all $y \in Y$ and $n \in \omega$, hence all the points $a_{y,n}$ may be chosen from $K$, and then
$\overline{D} \subs K$.
    \end{proof}

We are now ready to present the main result of this section.

 \begin{theorem}\label{tm:ckt2t3}
    Let $X$ be a $T_2$ space of countable closed pseudocharacter and assume
    that the closure of every countable SD
    subset of $X$ is countably compact. Then
    $X$ is countably compact, $T_3$, and first countable.
    \end{theorem}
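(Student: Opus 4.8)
The plan is to prove the three conclusions---countable compactness, $T_3$, and first countability---by exploiting the interplay between the hypotheses. Note first that the hypothesis ``the closure of every countable SD subset is countably compact'' is exactly property $\aaamin{\omega}$ in disguise, since a countably compact set has a complete accumulation point of every countably infinite subset; in particular $X$ has property $\aaamin{\omega}$, which lets me invoke Lemma~\ref{lm:fluffycomp}.

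First I would establish \emph{countable compactness}. It suffices to show that every countably infinite subset $Y$ of $X$ has an accumulation point. By Theorem~\ref{tm:fluffy}, $Y$ has an infinite fluffy subset $Y'$, and since $\psi_c(X)={\omega}$ we have $\psi_c(y,X)={\omega}$ for each $y\in Y'$. Applying Lemma~\ref{lm:fluffycomp} (the first assertion, which needs only $\aaamin{\omega}$) produces an SD set $D$ with $Y'\subs\overline D$; but $\overline D$ is countably compact by hypothesis, so $\overline{Y'}\subs\overline D$ is countably compact, and in particular $Y'$---hence $Y$---has an accumulation point in $X$.

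Next I would prove the \emph{$T_3$ property}, which I expect to be the main obstacle. The idea is to separate a point $p$ from a closed set not containing it, equivalently to show that every neighborhood of $p$ contains the closure of a smaller neighborhood. Fix a decreasing sequence $\{V_n:n\in{\omega}\}$ witnessing $\psi_c(p,X)={\omega}$. The key use of countable compactness (just established) together with Lemma~\ref{lm:almostconvergeinG}(i) is that any sequence of points chosen from the $\overline{V_n}\setm\{p\}$ accumulates only at $p$; combined with countable compactness this forces such sequences to actually converge to $p$. I would argue that $\bigcap_n\overline{V_n}=\{p\}$ together with the convergence behavior yields that the $\overline{V_n}$ themselves form a neighborhood base at $p$: if some open $G\ni p$ contained no $\overline{V_n}$, then for each $n$ I could pick $a_n\in\overline{V_n}\setm G$, and the resulting sequence would accumulate (by countable compactness) at a point which by Lemma~\ref{lm:almostconvergeinG}(i) must be $p$, contradicting that the $a_n$ avoid the neighborhood $G$ of $p$. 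This simultaneously shows $\chi(p,X)={\omega}$, giving \emph{first countability}, and that $p$ has a base of closed neighborhoods, giving regularity at $p$.

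The delicate point in the $T_3$ argument is extracting a genuinely convergent subsequence from the accumulation behavior: countable compactness alone gives accumulation points, not limits, so I must combine it carefully with part~(i) of Lemma~\ref{lm:almostconvergeinG}, which pins down the \emph{only} possible accumulation point as $p$. Once every point is shown to have a countable base of closed neighborhoods, $T_3$ and first countability follow immediately, and the proof is complete. I would present the first-countability and regularity conclusions as consequences of the single structural fact that $\{\overline{V_n}:n\in{\omega}\}$ is a neighborhood base at each point $p$, since this is the cleanest way to package all three conclusions together.
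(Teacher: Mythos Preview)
Your approach is essentially the paper's: fluffy subsets plus Lemma~\ref{lm:fluffycomp} for countable compactness, then the contradiction via Lemma~\ref{lm:almostconvergeinG}(i) for $T_3$, with first countability falling out of the same neighborhood-base argument (the paper instead cites the folklore fact that countably compact $T_3$ plus $\psi=\omega$ gives $\chi=\omega$, which is exactly your argument).

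One small technical gap: when you assert ``since $\psi_c(X)=\omega$ we have $\psi_c(y,X)=\omega$ for each $y\in Y'$,'' this fails at isolated points, where $\psi_c=1$, and Lemma~\ref{lm:fluffycomp} genuinely needs each $y$ to be non-isolated (the proof invokes Lemma~\ref{lm:almostconvergeinG}(ii), which requires $y$ to be an accumulation point). The paper handles this by splitting into cases: if the fluffy set $Y'$ contains infinitely many isolated points, those already form an SD set and the hypothesis applies directly; otherwise discard the finitely many isolated points and proceed as you do. Also, your claim that the hypothesis ``is exactly property $\aaamin{\omega}$ in disguise'' overstates things---the hypothesis is strictly stronger---but you only use the implication in one direction, so the argument is unaffected.
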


\begin{proof}
By Theorem \ref{tm:fluffy} any infinite subset of $X$ has a countably infinite fluffy subset,
hence the countable compactness of $X$ follows if we show that every countably infinite fluffy
set $Y \subs X$ has an accumulation point in $X$. If $Y$ contains infinitely many isolated
points then this is immediate from our assumption because any set of  isolated
points is SD. So, we may assume that $\psi_c(y,X) = {\omega}$ for each $y\in Y$.
But our assumption clearly implies that $X$ possesses property $\aaamin{\omega}$,
hence in this case we may apply Lemma \ref{lm:fluffycomp} to conclude that even
$\overline{Y}$ is countably compact.

Next, to show that $X$ is $T_3$, consider a closed set $F \subs X$ and a point $p \notin F$.
Of course, we may assume that $p$ is not isolated. Then we have $\psi_c(p,X) = \omega$ and we have
a strictly decreasing sequence $\{V_n:n\in {\omega}\}$  of open neighborhoods of $p$ such that
$\bigcap_{n\in {\omega}}\overline{V_n} = \{p\}.$ We claim that there is an $n \in \omega$ for
which $F \cap \overline{V_n} = \emptyset$, hence $F$ and $p$ do have disjoint neighborhoods.

Assume, on the contrary that  $F \cap \overline{V_n} \ne \emptyset$ for each $n$.
Then, we may clearly pick distinct points $x_n \in F \cap \overline{V_n}$ and
by the countable compactness of $X$ the infinite set $\{x_n : n \in \omega\}$ has
an accumulation point $x \in F$. This, however contradicts part (i) of Lemma
\ref{lm:almostconvergeinG}.

Finally, it is well-known that in a countably compact $T_3$ space every point of
countable pseudocharacter actually has countable character, hence $X$ is indeed first countable.
\end{proof}

Clearly, if $X$ has property $\aaa {\kappa}$ then $X$ is \kcC {\kappa}.
If, on the other hand, $X$ is $T_2$ and $\psi_c(X) \le {\omega}$ then the reverse of this implication also holds.
In fact, by Lemma \ref{lm:fluffycomp} then \kcC {\kappa} implies that every fluffy subset of $X$ of size $ {\kappa}$ has compact closure.
In particular, if $X$ is $\omega$-\cC\ then it satisfies all the assumptions of Theorem \ref{tm:ckt2t3}.

\section{When \cC\ implies compact}

In \cite{TW} the authors raised several questions if \cC\ spaces with certain properties
are necessarily compact. For instance, they constructed with the help of CH a
first countable and separable \cC\ Tychonov space that is not compact and asked if
this can be done in ZFC. Our next result gives a negative answer to this question.

\begin{theorem}\label{tm:covMw1-cC2compact}
If $cov(\mc M)>{\omega}_1$ then every first countable and separable
\cC\  $T_2$ space is compact.
\end{theorem}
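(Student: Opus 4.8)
The plan is to argue by contradiction, first squeezing all the regularity we can out of the hypotheses and then using $cov(\mc M)>\omega_1$ to manufacture a single compact set that is forced to carry an uncountable free sequence. First I would observe that the hypotheses already put us in the setting of Theorem \ref{tm:ckt2t3}: a separable space is ccc, so every cellular family is countable and hence \kcC{\omega}; together with $T_2$ and first countability (so $\psi_c(X)=\omega$) this yields, via that theorem, that $X$ is countably compact, $T_3$ and first countable. Fix a countable dense set $D=\{d_n:n<\omega\}$ and suppose, for contradiction, that $X$ is \emph{not} compact. Since a countably compact $T_3$ Lindel\"of space is compact, $X$ is not Lindel\"of, and a routine transfinite recursion (using regularity to separate each new point from the closure of the earlier ones, and countable compactness to guarantee that no countable stage already covers) produces a free sequence $\{x_\alpha:\alpha<\omega_1\}$ in $X$. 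As $X$ is first countable and $D$ is dense, each $x_\alpha$ is the limit of a sequence from $D$; applying Lemma \ref{lm:almostconvergeinG}(ii) to (the range of) such a sequence inside a witnessing decreasing neighborhood base of $x_\alpha$, I obtain for every $\alpha$ a cellular family $\{U^\alpha_n:n<\omega\}$ shrinking to $x_\alpha$ together with points $d_{g_\alpha(n)}\in U^\alpha_n\cap D$, i.e. a function $g_\alpha\in\omega^\omega$ coding an approach of $x_\alpha$ through pairwise disjoint open sets.

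Next I would invoke the combinatorial content of the hypothesis. Recall that $cov(\mc M)>\omega_1$ is equivalent to the statement that for every family $\{g_\alpha:\alpha<\omega_1\}\subs\omega^\omega$ there is a single $f\in\omega^\omega$ with $\{n:f(n)=g_\alpha(n)\}$ infinite for every $\alpha<\omega_1$ (the ``infinitely-often-equal'' characterization of $cov(\mc M)$). Feeding the functions $g_\alpha$ above into this principle yields one $f$ that, for each $\alpha$, agrees with $g_\alpha$ on an infinite set $E_\alpha\subs\omega$. The point of the construction is that the selected points $d_{f(n)}$ then lie in the disjoint sets $U^\alpha_n$ for all $n\in E_\alpha$, and since $\{U^\alpha_n:n\in E_\alpha\}$ shrinks to $x_\alpha$, the subsequence $\langle d_{f(n)}:n\in E_\alpha\rangle$ converges to $x_\alpha$. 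Organizing the chosen disjoint sets into a single countable cellular family $\{V_k:k<\omega\}$ (the master disjoint refinement carrying the terms $d_{f(n)}$) I arrange that, for each $\alpha<\omega_1$, infinitely many $V_k$ lie in arbitrarily small neighborhoods of $x_\alpha$. Now cellular-compactness of $X$ applies to $\{V_k:k<\omega\}$ and produces a compact $K\subs X$ with $K\cap V_k\ne\emptyset$ for all $k$; picking witnesses $s_k\in K\cap V_k$, the relevant subsequences converge to each $x_\alpha$, so $x_\alpha\in\overline K=K$ for every $\alpha$. Thus the single compact set $K$ contains the whole free sequence $\{x_\alpha:\alpha<\omega_1\}$.

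The contradiction is then immediate: $K$ is a compact $T_2$ subspace of a first countable space, hence of countable tightness, and by the classical theorem of Arhangel'ski\u\i\ a compact Hausdorff space of countable tightness admits no free sequence of length $\omega_1$; but freeness is inherited from $X$ to the closed set $K$, since the closures of initial and final segments in $K$ are their closures in $X$ intersected with $K$. This forces $X$ to be compact. The hard part, and the only place where $cov(\mc M)>\omega_1$ is genuinely used, is the coordination in the middle paragraph: a \emph{single} countable cellular family must simultaneously ``feed'' all $\omega_1$ points of the free sequence, and it is exactly the infinitely-often-equal real that lets one disjoint family capture the uncountably many approach-sequences at once. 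Under CH this diagonalization fails, in keeping with the Tkachuk--Wilson example, so the bookkeeping that turns the $\omega_1$ coded approaches $g_\alpha$ into one honest cellular family whose cellular-compact transversal recovers every $x_\alpha$ is where the real care is required; the existence of the free sequence and the final tightness argument are routine by comparison.
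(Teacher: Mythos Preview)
Your overall architecture---reduce to $T_3$ and countably compact via Theorem \ref{tm:ckt2t3}, then trap a long free sequence inside a single compact set and contradict countable tightness---is the same as the paper's. But two steps are not justified, and the second one is exactly the heart of the matter.

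First, the recursion you describe does not produce a \emph{free} sequence. Choosing $x_\alpha$ outside $\overline{\{x_\beta:\beta<\alpha\}}$ and outside a countable partial union from the witnessing cover yields only a right-separated sequence; freeness also demands that $\overline{\{x_\beta:\beta<\alpha\}}$ miss $\overline{\{x_\beta:\beta\ge\alpha\}}$, which constrains all future choices. The standard device is to cover $\overline{\{x_\beta:\beta<\alpha\}}$ by finitely many members of the cover and pick $x_\alpha$ outside their union---but that needs $\overline{\{x_\beta:\beta<\alpha\}}$ to be compact, which is essentially the conclusion you are working toward. (It is even consistent that there exist countably tight, countably compact, non-compact $T_3$ spaces with no free sequence of length $\omega_1$, namely initially $\omega_1$-compact ones, so some extra input is genuinely needed here.)

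Second, and more seriously, the ``master disjoint refinement'' $\{V_k:k<\omega\}$ is asserted but never built, and the infinitely-often-equal characterization of $cov(\mc M)$ is too coarse to produce it. The function $f$ hands you a single sequence of \emph{points} $d_{f(n)}$ such that for each $\alpha$ a subsequence lies in the basic neighborhoods of $x_\alpha$. But cellular-compactness needs disjoint open sets $V_k$, and for the witnesses $s_k\in K\cap V_k$ to still converge to every $x_\alpha$ you need $V_k\subs V^\alpha_k$ for the relevant $k$. For a fixed $k$ there may be uncountably many $\alpha$ with $k\in E_\alpha$, each imposing its own containment; the intersection of these $V^\alpha_k$ need not have interior, so no choice of open $V_k$ is evident. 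Disjointness of the $V_k$ is a separate unmet requirement. In short, IOE synchronizes \emph{indices into $D$}, not open sets.

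The paper sidesteps both problems by using $cov(\mc M)>\omega_1$ in its $MA_{\omega_1}(\text{countable})$ form. Lemma \ref{lm:covMw1-disjointPibase} forces, over the countable poset of finite disjoint subfamilies of a fixed countable $\pi$-base with closures avoiding $\overline H$, a single disjoint family that is simultaneously a local $\pi$-base at every point of a given nowhere dense $H$ with $|H|<cov(\mc M)$; cellular-compactness then makes $\overline H$ compact. Applying this to any free sequence of length $\le\omega_1$ (discrete, hence nowhere dense in the crowded case) and invoking the lemma from \cite{JS} that compact closures of all free sequences force compactness finishes the proof. The poset is precisely what delivers disjointness and the simultaneous local $\pi$-base property in one stroke---the coordination your IOE argument lacks.
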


We actually start with giving a preparatory result. We recall that a $T_2$
space $X$ is called {\em $\pi$-regular} if every non-empty open
set in $X$ includes a non-empty regular closed set. This property is clearly weaker than regularity.

We also recall that $cov(\mc M)>\mu$, the statement that the covering number of the meager ideal $\mc M$
on the reals is $> \mu$, is equivalent with $MA_\mu$(countable), as we shall use it in this form.

\begin{lemma}\label{lm:covMw1-disjointPibase}
  Assume that $X$ is a  $\pi$-regular space of countable $\pi$-weight
with $\mc A$ a fixed  (countable) $\pi$-base of $X$,  moreover
$H$ is a nowhere dense subset of $X$ such that $|H| < cov(\mc M)$
and $$\sup\{\chi(h,X):h\in H\}< cov(\mc M).$$ Then $\mc A$ has a {\em disjoint} subfamily
$\mc B$ that is simultaneously a local $\pi$-base for all points $h\in H$.
Consequently, if $X$ is also \cC\ then $\overline{H}$ is compact.
\end{lemma}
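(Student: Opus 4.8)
The plan is to construct $\mc B$ by a forcing-style genericity argument, exploiting the stated equivalence of $cov(\mc M)>\mu$ with $MA_{\mu}$(countable). Let $P$ be the poset whose conditions are the finite subfamilies $p\subs\mc A$ such that the closures $\{\overline A:A\in p\}$ are pairwise disjoint and each of them is disjoint from $\overline H$, ordered by reverse inclusion. Since $\mc A$ is countable, $P$ is a countable poset, and the union of any filter on $P$ is a subfamily of $\mc A$ with pairwise disjoint closures, hence a disjoint subfamily of $\mc A$.

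For each $h\in H$ I fix a neighborhood base $\mc V_h$ with $|\mc V_h|=\chi(h,X)$ and, for $V\in\mc V_h$, put $D_{h,V}=\{p\in P:A\subs V\text{ for some }A\in p\}$. A filter $G$ meeting every $D_{h,V}$ then yields the required family $\mc B=\bigcup G$: it is disjoint by construction, and for $h\in H$ and an arbitrary neighborhood $U$ of $h$ I choose $V\in\mc V_h$ with $V\subs U$ and obtain from $G\cap D_{h,V}$ a member $A\in\mc B$ with $A\subs V\subs U$, so $\mc B$ is a local $\pi$-base at $h$. The number of dense sets involved is $\sum_{h\in H}\chi(h,X)$, which is $<cov(\mc M)$, being at most the maximum of $|H|$ and $\sup_{h\in H}\chi(h,X)$, both $<cov(\mc M)$ by hypothesis; hence $MA_{\mu}$(countable) supplies such a $G$.

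The step I expect to be the main obstacle is showing that each $D_{h,V}$ is dense, i.e. that a finite disjoint approximation can always be extended by a new member of $\mc A$ lying inside $V$. A direct choice fails when the pieces already chosen happen to be dense near $h$, and this is exactly where the two side conditions built into $P$ together with the hypotheses on $H$ are used. Given $p\in P$ and a neighborhood $V$ of $h$: since every closure occurring in $p$ misses $\overline H\ni h$, the point $h$ lies outside $C=\bigcup_{A'\in p}\overline{A'}$, so $V\setm C$ is a neighborhood of $h$; as $H$ is nowhere dense, $X\setm\overline H$ is dense, whence $W=(V\setm C)\cap(X\setm\overline H)$ is a non-empty open set. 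Now $\pi$-regularity yields a non-empty regular closed $F\subs W$, and since $\mc A$ is a $\pi$-base there is $A\in\mc A$ with $A\subs\inte F$; then $\overline A\subs\overline{\inte F}=F\subs W$, so $\overline A$ avoids both $C$ and $\overline H$ while $A\subs V$. Thus $p\cup\{A\}\in D_{h,V}$ extends $p$, as needed.

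Finally, for the concluding clause, if $X$ is also \cC, I apply cellular-compactness to the cellular family $\mc B$ to obtain a compact $K$ with $K\cap B\ne\empt$ for every $B\in\mc B$. For $h\in H$, every neighborhood $U$ of $h$ contains some $B\in\mc B$, so $K\cap U\ne\empt$; since $X$ is $T_2$ and $K$ is therefore closed, this gives $h\in K$. Hence $H\subs K$, so $\overline H\subs K$ is a closed subspace of a compact space and is itself compact.
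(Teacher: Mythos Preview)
Your proof is correct and follows essentially the same approach as the paper: a countable poset of finite disjoint subfamilies of $\mc A$ whose closures miss $\overline{H}$, dense sets indexed by basic neighborhoods of points of $H$, and the same density argument via $\pi$-regularity and the nowhere-denseness of $H$. The only inessential differences are that you require the \emph{closures} (not just the open sets) in a condition to be pairwise disjoint, and you spell out the final compactness step (invoking $T_2$ to get $K$ closed) a bit more explicitly than the paper does.
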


\begin{proof}

 Let us consider the countable partial order $\mc P=\<P,\supseteq\>$, where
 \begin{displaymath}
 P=\{p\in \br \mc A;<{\omega};: p  \text{ is disjoint and } \overline{\cup p}\cap \overline H=\empt\}.
 \end{displaymath}

For any open set $U$ we let
\begin{displaymath}
 D_{U}=\{p\in P: \exists V\in p\ V\subs U\}.
\end{displaymath}

\noindent{\bf Claim.} For every open set $U$ with $U \cap H \ne \emptyset$ the family {\em $D_U$ is dense in $\mc P$.}

Indeed, consider an arbitrary  $p\in P$. Then $U \cap H \ne \emptyset$ implies that the open set  $W_0=U\setm \overline {\cup p} \ne \emptyset$.
Since $H$ is nowhere dense, $W_1=W_0\setm \overline H\ne \empt$ as well, hence
there is a non-empty open $S$ such that $\overline S\subs W_1$ because $X$ is $\pi$-regular.
Now, if $V\in \mc A$ is such that $V\subs S$ then $q=p\cup\{V\}\in D_U$ and  $q \supset p$,
and the claim is verified

For each $h\in H$ let $\mc U(h)$ be an open neighborhood base of $h$ in $X$ of size ${\chi}(h,X)$
and set $\mathcal{U} = \bigcup \{U(h) : h \in H\}.$ Our assumptions then imply $|\mathcal{U}| < cov(\mc M)$,
while the Claim implies that $D_{U}$ is dense in $\mc P$ for each $U \in \mc U$. But then there is a
filter $G$ in $\mc P$ that intersects the dense set $D_{U}$ for each $U \in \mc U$. Clearly, then
$\mc B = \bigcup\,G$ is as required.

Also, if a set $S$ intersects all members of $\mc B$ then obviously we have $H \subs \overline{S}$,
in particular, $\overline{H}$ is compact if $\overline{S}$ is.
\end{proof}

\begin{proof}[Proof of Theorem \ref{tm:covMw1-cC2compact}]
First we shall prove the result with the additional assumption that $X$ is crowded,
i.e. has no isolated points.
Let us next observe that $X$ is actually $T_3$ by Theorem \ref{tm:ckt2t3},
moreover $X$ has countable $\pi$-weight being separable and first countable,
so we can apply Lemma \ref{lm:covMw1-disjointPibase} to $X$.

To see that $X$ is compact, we shall use the fact, see e.g. Lemma 1.7 in \cite{JS},
that if every free sequence in $X$ has compact closure then $X$ is compact. Now, as $X$ is crowded,
every discrete subspace, in particular every free sequence is nowhere dense in $X$.
But Lemma \ref{lm:covMw1-disjointPibase} now implies that every nowhere dense subset
of $X$ of cardinality $\le\, \omega_1$ has compact closure, while in a first countable compact space
every free sequence is countable. These observations together imply that every free sequence in $X$ is countable.
But then we can conclude that every free sequence in $X$ has compact closure and hence $X$ is compact.

In the general case, in which $X$ may have isolated points, let $I$ denote the set of all
isolated points of $X$. Then we know that $\overline{I}$ is compact because $X$ is \cC.
But then $Y = \overline{X \setm \overline{I}}$ is a regular closed set in $X$ which clearly
inherits from $X$ separability, first countability, and the \cC\ property, moreover $Y$ has
no isolated points, hence $Y$ is compact, and so $X$ is the union of two compact subsets.
\end{proof}

\smallskip

Question 5.3 (resp. 5.4) of \cite{TW} asks if a \cC\ space that is hereditarily separable (resp. has countable spread)
is actually compact. Of course, the affirmative answer to 5.4 implies the same for 5.3. The authors of \cite{TW}
failed to raise the same type of question for the closely related class of hereditarily Lindelöf ($T_2$) spaces.
Our next goal is to show that in this class \cC\ does imply compact. Interestingly, this then implies that the
affirmative answer to 5.4 is consistent, at least for $T_3$ spaces.

\begin{theorem}\label{tm:hL}
Every hereditarily Lindelöf and \cC\ $T_2$ space is compact.
\end{theorem}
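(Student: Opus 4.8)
The plan is to reduce compactness to the conjunction of two properties: the Lindelöf property, which is given, and countable compactness, since a Lindelöf countably compact space is compact. To obtain countable compactness I would invoke Theorem \ref{tm:ckt2t3}, so the real task is to verify its hypotheses for a hereditarily Lindelöf \cC\ $T_2$ space $X$, namely that $X$ has countable closed pseudocharacter and that the closure of every countable SD subset is countably compact.

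The heart of the argument is establishing $\psi_c(X)\le \omega$. Fix a point $p\in X$. Using that $X$ is $T_2$, for every $q\ne p$ choose disjoint open sets $U_q\ni p$ and $V_q\ni q$; then $V_q$ is a neighborhood of $q$ disjoint from $U_q$, so $q\notin \overline{U_q}$. Now the family $\{V_q:q\ne p\}$ covers the open subspace $X\setm\{p\}$, which is Lindelöf because $X$ is hereditarily Lindelöf, hence a countable subfamily $\{V_{q_n}:n\in \omega\}$ already covers $X\setm\{p\}$. I then claim $\{p\}=\bigcap_n U_{q_n}=\bigcap_n \overline{U_{q_n}}$: each $x\ne p$ lies in some $V_{q_n}$, which is an open neighborhood of $x$ disjoint from $U_{q_n}$, so $x\notin \overline{U_{q_n}}$; and $p$ lies in every $U_{q_n}$. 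This exhibits $\psi_c(p,X)\le \omega$, and as $p$ was arbitrary we get $\psi_c(X)\le \omega$.

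With $\psi_c(X)\le \omega$ in hand, I would appeal to the remark following Theorem \ref{tm:ckt2t3} (which rests on Lemma \ref{lm:fluffycomp}): since $X$ is $T_2$, has countable closed pseudocharacter, and is $\omega$-\cC\ (being \cC), it satisfies all the assumptions of that theorem. Consequently $X$ is countably compact, and incidentally $T_3$ and first countable. Finally, $X$ is Lindelöf as it is hereditarily Lindelöf, and a Lindelöf countably compact space is compact, which completes the proof.

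I expect the one genuinely substantive step to be the countable closed pseudocharacter computation, where hereditary Lindelöfness is used precisely to replace the uncountable $T_2$ separating family $\{V_q:q\ne p\}$ by a countable subcover; once $\psi_c(X)\le \omega$ is secured, everything else is routine plumbing that feeds into the already-proved Theorem \ref{tm:ckt2t3} and the standard fact that Lindelöf plus countably compact equals compact.
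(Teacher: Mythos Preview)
Your proposal is correct and follows essentially the same route as the paper: hereditarily Lindel\"of $T_2$ gives $\psi_c(X)\le\omega$, Theorem~\ref{tm:ckt2t3} (via the remark after it) then yields countable compactness, and Lindel\"of plus countably compact gives compact. The only difference is cosmetic: the paper cites 2.10 of \cite{Ju} for the inequality $\psi_c(X)\le\omega$, whereas you supply the standard direct argument.
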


\begin{proof}
According to 2.10 of \cite{Ju}, any hereditarily Lindelöf $T_2$ space has countable closed pseudocharacter,
hence if it is \cC\ then it is countably compact by our Theorem \ref{tm:ckt2t3}. But countably compact Lindelöf
spaces are compact.
\end{proof}

It is well-known that if a space $X$ of countable spread is not hereditarily Lindelöf then it has a hereditarily separable
subspace $Y$ that is not hereditarily Lindelöf. So, if $X$ is $T_3$ then $Y$ is an S-space.

\begin{corollary}\label{tm:cKhd}
Any non-compact but \cC\ $T_3$ space of countable spread has an S-subspace. 
So, if there is no S-space then every \cC\ $T_3$ space of countable spread is compact.   
\end{corollary}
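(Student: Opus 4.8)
The plan is to prove the Corollary by combining the two distinct assertions in its statement, the first of which is purely a covering-theoretic dichotomy and the second a formal consequence of it together with Theorem \ref{tm:hL}.

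First I would establish the main claim: that a non-compact \cC\ $T_3$ space $X$ of countable spread contains an S-subspace. The starting observation, recalled in the sentence preceding the statement, is the standard fact that a space of countable spread which fails to be hereditarily Lindel\"of admits a hereditarily separable subspace $Y$ that is itself not hereditarily Lindel\"of. I would exploit this by arguing contrapositively about hereditary Lindel\"ofness. Suppose $X$ were hereditarily Lindel\"of; then, being \cC\ and $T_2$, Theorem \ref{tm:hL} would force $X$ to be compact, contradicting our hypothesis. Hence $X$ is \emph{not} hereditarily Lindel\"of, and since $X$ has countable spread we may invoke the quoted fact to extract a hereditarily separable subspace $Y \subs X$ that is not hereditarily Lindel\"of. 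Finally, because $X$ is $T_3$ the subspace $Y$ inherits the $T_3$ (indeed $T_2$) separation property, so $Y$ is a hereditarily separable, non-hereditarily-Lindel\"of regular space — that is, precisely an S-space. This yields the first assertion.

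The second assertion is then immediate by contraposition: if there exists no S-space, then by the first part no non-compact \cC\ $T_3$ space of countable spread can exist, since such a space would produce an S-subspace. Equivalently, every \cC\ $T_3$ space of countable spread must be compact. Under PFA, for instance, S-spaces do not exist, so the hypothesis is consistent.

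I expect the only genuine subtlety to be the verification that the extracted subspace $Y$ qualifies as an S-space in the intended sense, namely that the relevant separation and covering properties are correctly inherited: hereditary separability and the failure of hereditary Lindel\"ofness both pass to (and are witnessed within) the subspace $Y$, and $T_3$ is hereditary, so $Y$ is regular. The \cC\ property of the ambient $X$ is, notably, \emph{not} needed for $Y$ itself beyond its role in ruling out compactness of $X$ via Theorem \ref{tm:hL}; the whole argument routes the \cC\ hypothesis through that single application. No delicate construction is required, so the proof is essentially a bookkeeping assembly of the dichotomy for countable-spread spaces, the inherited regularity, and Theorem \ref{tm:hL}.
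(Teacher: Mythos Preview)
Your proposal is correct and follows essentially the same route as the paper: the paper's argument (given in the sentence preceding the corollary rather than as a formal proof) is precisely to observe that Theorem \ref{tm:hL} forces a non-compact \cC\ $T_2$ space not to be hereditarily Lindel\"of, then to invoke the standard countable-spread dichotomy to extract a hereditarily separable, non-hereditarily-Lindel\"of subspace, which in a $T_3$ space is an S-space. Your elaboration of the inherited properties and the contrapositive for the second assertion matches this exactly.
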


Of course, it is known that there are models of ZFC in which no S-space exits, for instance
PFA implies this. The final result of this section shows that for locally compact $T_2$ spaces
already $MA_{{\omega}_1}$ suffices to get the same result. Note that it is consistent with
$MA_{{\omega}_1}$ that S-spaces exist, see e.g. \cite{Zoli}.   

\begin{corollary}\label{co:cKhdlk}
$MA_{{\omega}_1}$ implies that every locally compact and \cC\ $T_2$ space $X$ of countable spread is compact.
\end{corollary}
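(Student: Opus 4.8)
The plan is to sidestep Corollary \ref{tm:cKhd} — which only gives an S-\emph{subspace} and so is useless on its own, since $MA_{\omega_1}$ is consistent with the existence of S-spaces — and instead to manufacture a \emph{compact} space to which the classical theorem of Szentmiklóssy applies, namely that $MA_{\omega_1}$ implies every compact $T_2$ space of countable spread is hereditarily Lindelöf (see \cite{Zoli}). This is precisely where local compactness is needed: it is what lets us pass to a Hausdorff compactification without disturbing the countable spread.

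First I would form the one-point (Alexandrov) compactification $X^\ast = X \cup \{\infty\}$. Since $X$ is locally compact and $T_2$, the space $X^\ast$ is compact and $T_2$. The key, entirely routine, observation is that adjoining a single point cannot destroy countable spread: if $D \subs X^\ast$ is discrete in itself, then $D \cap X$, carrying the subspace topology inherited from $X$, is a discrete subspace of $X$ and hence countable, so that $|D| \le \omega$. Therefore $s(X^\ast) = \omega$.

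Next I would apply Szentmiklóssy's theorem to $X^\ast$, concluding that $X^\ast$ is hereditarily Lindelöf. As hereditary Lindelöfness is inherited by arbitrary subspaces, $X$ itself is hereditarily Lindelöf. But $X$ is also \cC\ and $T_2$, so Theorem \ref{tm:hL} immediately yields that $X$ is compact, which completes the argument.

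As for the main obstacle: there is essentially no computation here, and the only technical points — that $X^\ast$ remains Hausdorff and that it retains countable spread — both hinge transparently on local compactness and are dispatched in a line. The genuine content of the proof is the conceptual step of compactifying in order to bring the no-compact-S-space theorem into play; once one recognizes that $MA_{\omega_1}$ must be used through Szentmiklóssy's result rather than through the bare absence of S-spaces, the remaining pieces assemble automatically from Theorem \ref{tm:hL}.
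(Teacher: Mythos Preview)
Your argument is correct and follows essentially the same strategy as the paper: pass to the one-point compactification $X^\ast$, observe that it retains countable spread, and then invoke Szentmikl\'ossy's theorem to finish. The differences are only in packaging. The paper argues by contradiction: assuming $X$ is not compact, Corollary~\ref{tm:cKhd} produces an S-subspace of $X$, and then Szentmikl\'ossy's theorem (in the form ``under $MA_{\omega_1}$ a compact space of countable tightness has no S-subspace'') applied to $X^\ast$ gives the contradiction. You instead use the equivalent formulation ``under $MA_{\omega_1}$ every compact $T_2$ space of countable spread is hereditarily Lindel\"of'' to conclude directly that $X$ is hereditarily Lindel\"of, and then close with Theorem~\ref{tm:hL}. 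Your route is marginally more streamlined in that it avoids the detour through Corollary~\ref{tm:cKhd} and the contradiction; the paper's route makes the role of S-spaces more explicit. Either way, the substantive content---compactify and apply Szentmikl\'ossy---is identical.
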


\begin{proof}
Assume, on the contrary, that $X$ is not compact, then by the previous corollary $X$ has an S-subspace.
let $K$ be the one point compactification of $X$. Then clearly $K$ also has countable spread,
and consequently has countable tightness. But $MA_{{\omega}_1}$ implies that a compact space of countably tightness 
has no S-subspace by a theorem of Szentmiklóssy \cite{Zoli}. We have reached a contradiction.
\end{proof}

It is an immediate consequence of this result and of Theorem 3.13 of \cite{TW} that, under $MA_{{\omega}_1}$, every point
of a compact $T_2$ space of countable spread has a disjoint local $\pi$-base. Thus $MA_{{\omega}_1}$ implies a consistent affirmative answer
to Question 5.8 of \cite{TW} that asked this for hereditarily separable compacta.

\end{document}